\newtheorem{theorem}{Theorem}[section]
\newtheorem{corollary}[theorem]{Corollary}
\theoremstyle{definition}
\theoremstyle{remark}
\newtheorem{remark}[theorem]{Remark}
\numberwithin{equation}{section}
\newcommand{\be}{\begin{equation}}
\newcommand{\ee}{\end{equation}}
\newcommand{\cM}{{\mathcal M}}
\newcommand{\NN}{\mathbb{N}}
\begin{document}
\setcounter{page}{1}

\title[Bounds on the joint and generalized spectral radius]{Bounds on the joint and generalized spectral radius of Hadamard geometric mean of bounded sets of positive kernel operators}

\author[A. Peperko]{Aljo\v{s}a Peperko$^{1,2} $}

\address{$^{1}$  Faculty of Mechanical Engineering, University of Ljubljana, A\v{s}ker\v{c}eva 6, SI-1000 Ljubljana, Slovenia;
\newline
$^{2}$ Institute of Mathematics, Physics, and Mechanics,
Jadranska 19, SI-1000 Ljubljana, Slovenia.}
\email{\textcolor[rgb]{0.00,0.00,0.84}{aljosa.peperko@fmf.uni-lj.si; aljosa.peperko@fs.uni-lj.si}}


\subjclass[2010]{15A42, 15A60, 47B65, 47B34, 47A10, 15B48}

\keywords{Hadamard-Schur geometric mean; Hadamard-Schur product; joint and generalized spectral radius; positive kernel operators; non-negative matrices; bounded sets of operators}


\begin{abstract}
 Let $\Psi _1, \ldots \Psi _m$ be bounded sets of positive kernel operators on a Banach function space $L$. We prove that for the generalized spectral radius $\rho$ and the joint spectral radius 
$\hat{\rho}$ the inequalities
$$\rho \left(\Psi _1 ^{\left( \frac{1}{m} \right)} \circ \cdots \circ \Psi _m  ^{\left( \frac{1}{m} \right)} \right) \le \rho (\Psi _1 \Psi _2 \cdots \Psi _m) ^{\frac{1}{m}},$$
$$\hat{\rho}  \left(\Psi _1 ^{\left( \frac{1}{m} \right)} \circ \cdots \circ \Psi _m  ^{\left( \frac{1}{m} \right)} \right) \le \hat{\rho}   (\Psi _1 \Psi _2 \cdots \Psi _m) ^{\frac{1}{m}}$$
 hold, where $\Psi _1 ^{\left( \frac{1}{m} \right)} \circ \cdots \circ \Psi _m  ^{\left( \frac{1}{m} \right)}$ denotes the 
 Hadamard (Schur) geometric mean of the sets $\Psi _1, \ldots , \Psi _m$.
\end{abstract} \maketitle

\section{Introduction}

In \cite{Zh09}, X. Zhan conjectured that, for non-negative $n\times n$ matrices $A$ and $B$, the spectral radius $\rho (A\circ B)$ of the Hadamard product satisfies
$$\rho (A\circ B) \le \rho (AB),$$
where $AB$ denotes the usual matrix product of $A$ and $B$. This conjecture was confirmed by K.M.R. Audenaert in \cite{Au10} via a trace description of the spectral radius. Soon after, this inequality was reproved, generalized and refined in different ways by 
several authors (\cite{HZ10},  \cite{Hu11}, \cite{S11}, \cite{Sc11}, \cite{P12}, \cite{CZ15}, \cite{DP16}).
Applying a fact that the Hadamard product is a principal submatrix of the Kronecker product (i.e., by applying the technique used by R.A. Horn and F. Zhang  of \cite{HZ10}), Z. Huang proved that 
\be
\rho (A_1 \circ A_2 \circ \cdots \circ A_m) \le \rho (A_1 A_2 \cdots A_m)
\label{Hu}
\ee
for $n\times n$ non-negative matrices $A_1, A_2, \cdots, A_m$ (see \cite{Hu11}). The author of the current paper extended the inequality (\ref{Hu}) to non-negative matrices that define bounded 
operators on Banach sequence spaces in \cite{P12}. Moreover, in \cite[Theorem 3.16]{P12}  he generalized this inequality to the setting of the generalized and the joint spectral radius of bounded sets of such non-negative matrices. 
In the proofs certain results on the Hadamard product from \cite{DP05} and \cite{P06} were used.

Earlier, A.R. Schep was the first one to observe that the results  \cite{DP05} and \cite{P06} are applicable in this context (see \cite{S11} and \cite{Sc11}). In particular, in \cite[Theorem 2.8]{S11}  he proved that the inequality
\be
\rho \left(A ^{\left( \frac{1}{2} \right)} \circ B  ^{\left( \frac{1}{2} \right)} \right) \le \rho (AB) ^{\frac{1}{2}}
\label{Schep}
\ee
holds for positive kernel operators on $L^p$ spaces. Here $A ^{\left( \frac{1}{2} \right)} \circ B  ^{\left( \frac{1}{2}\right)} $ denotes the Hadamard geometric mean of operators $A$ and $B$. In \cite[Theorem 3.1]{DP16}, R. Drnov\v{s}ek and the author, generalized this inequality 
and proved that the inequality 
\be 
\rho \left(A_1^{\left(\frac{1}{m}\right)} \circ A_2^{\left(\frac{1}{m}\right)} \circ \cdots \circ 
A_m^{\left(\frac{1}{m}\right)}\right)   \le \rho (A_1 A_2 \cdots A_m)^{\frac{1}{m}} 
\label{genHuBfs}
\ee
holds for positive kernel operators $A_1, \ldots, A_m$ on an arbitrary Banach function space.
 
The article is partly expository, since it also includes some new proofs of known results. It is organized as follows. In the second section we introduce some definitions and facts, and we recall some results from \cite{DP05} and \cite{P06}, 
which we will need in our proofs. In the third section we give a new proof of a key result from \cite{DP05} and \cite{P06} (Theorem \ref{ideaOmlDrn}) and recall what this result actually means in the setting of the generalized and the joint spectral radius (Theorem \ref{family}).
In our main result (Theorem \ref{genHuDP}) we generalize the inequality (\ref{genHuBfs}) to the setting of the generalized and the joint spectral radius of bounded sets of positive kernel operators on an arbitrary Banach function space. 


\section{Preliminaries}
\vspace{1mm}

Let $\mu$ be a $\sigma$-finite positive measure on a $\sigma$-algebra $\cM$ of subsets of a non-void set $X$.
Let $M(X,\mu)$ be the vector space of all equivalence classes of (almost everywhere equal)
complex measurable functions on $X$. A Banach space $L \subseteq M(X,\mu)$ is
called a {\it Banach function space} if $f \in L$, $g \in M(X,\mu)$,
and $|g| \le |f|$ imply that $g \in L$ and $\|g\| \le \|f\|$. Throughout the article, it is assumed that  $X$ is the carrier of $L$, that is, there is no subset $Y$ of $X$ of 
 strictly positive measure with the property that $f = 0$ a.e. on $Y$ for all $f \in L$ (see \cite{Za83}).

Standard examples of Banach function spaces are Euclidean spaces,  the space $c_0$ 
of all null convergent sequences  (equipped with the usual norms and the counting measure), the
well-known spaces $L^p (X,\mu)$ ($1\le p \le \infty$) and other less known examples such as Orlicz, Lorentz,  Marcinkiewicz  and more general  rearrangement-invariant spaces (see e.g. \cite{BS88}, \cite{CR07} and the references cited there), which are important e.g. in interpolation theory.
 Recall that the cartesian product $L=E\times F$ 
of Banach function spaces is again a Banach function space, equipped with the norm
$\|(f, g)\|_L=\max \{\|f\|_E, \|g\|_F\}$.

By an {\it operator} on a Banach function space $L$ we always mean a linear
operator on $L$.  An operator $A$ on $L$ is said to be {\it positive} 
if it maps nonnegative functions to nonnegative ones, i.e., $AL_+ \subset L_+$, where $L_+$ denotes the positive cone $L_+ =\{f\in L : f\ge 0 \; \mathrm{a.e.}\}$.
Given operators $A$ and $B$ on $L$, we write $A \ge B$ if the operator $A - B$ is positive.
Recall that a positive  operator $A$ is always bounded, i.e., its operator norm
\be
\|A\|=\sup\{\|Ax\|_L : x\in L, \|x\|_L \le 1\}=\sup\{\|Ax\|_L : x\in L_+, \|x\|_L \le 1\}
\label{equiv_op}
\ee
is finite.  
Also, its spectral radius $\rho (A)$ is always contained in the spectrum.

An operator $A$ on a Banach function space $L$ is called a {\it kernel operator} if
there exists a $\mu \times \mu$-measurable function
$a(x,y)$ on $X \times X$ such that, for all $f \in L$ and for almost all $x \in X$,
$$ \int_X |a(x,y) f(y)| \, d\mu(y) < \infty \ \ \ {\rm and} \ \ 
   (Af)(x) = \int_X a(x,y) f(y) \, d\mu(y)  .$$
One can check that a kernel operator $A$ is positive iff 
its kernel $a$ is non-negative almost everywhere. Observe that (finite or infinite) non-negative matrices that define operators on Banach sequence spaces are a special case of positive kernel operators 
(see e.g. \cite{P12}, \cite{DP16}, \cite{DP10} and the references cited there).  It is well-known that kernel operators play a very important, often even central, role in a variety of applications from differential and integro-differential equations, problems from physics 
(in particular from thermodinamics), engineering, statistical and economic models, etc (see e.g. \cite{J82}, \cite{BP03}, \cite{LL05}, \cite{DLR13} 
and the references cited there).
For the theory of Banach function spaces and more general Banach lattices we refer the reader to the books \cite{Za83}, \cite{BS88}, \cite{AA02}, \cite{AB85}. 

Let $A$ and $B$ be positive kernel operators on $L$ with kernels $a$ and $b$ respectively,
and $\alpha \ge 0$.
The \textit{Hadamard (or Schur) product} $A \circ B$ of $A$ and $B$ is the kernel operator
with kernel equal to $a(x,y)b(x,y)$ at point $(x,y) \in X \times X$ which can be defined (in general) 
only on some order ideal of $L$. Similarly, the \textit{Hadamard (or Schur) power} 
$A^{(\alpha)}$ of $A$ is the kernel operator with kernel equal to $(a(x, y))^{\alpha}$ 
at point $(x,y) \in X \times X$ which can be defined only on some order ideal of $L$.

Let $A_1 ,\ldots, A_n$ be positive kernel operators on a Banach function space $L$, 
and $\alpha _1, \ldots, \alpha _n$ positive numbers such that $\sum_{j=1}^n \alpha _j = 1$.
Then the {\it  Hadamard weighted geometric mean} 
$A = A_1 ^{( \alpha _1)} \circ A_2 ^{(\alpha _2)} \circ \cdots \circ A_n ^{(\alpha _n)}$ of 
the operators $A_1 ,\ldots, A_n$ is a positive kernel operator defined 
on the whole space $L$, since $A \le \alpha _1 A_1 + \alpha _2 A_2 + \ldots + \alpha _n A_n$ by the inequality between the weighted arithmetic and geometric means. Let us recall  the following result which was proved in \cite[Theorem 2.2]{DP05} and 
\cite[Theorem 5.1]{P06}. 

\begin{theorem} 
Let $\{A_{i j}\}_{i=1, j=1}^{k, m}$ be positive kernel operators on a Banach function space $L$.
If $\alpha _1$, $\alpha _2$,..., $\alpha _m$ are positive numbers  
such that $\sum_{j=1}^m \alpha _j = 1$, then the positive kernel operator
$$A:= \left(A_{1 1}^{(\alpha _1)} \circ \cdots \circ A_{1 m}^{(\alpha _m)}\right) \ldots \left(A_{k 1}^{(\alpha _1)} \circ \cdots \circ A_{k m}^{(\alpha _m)} \right)$$
satisfies the following inequalities
\begin{eqnarray}
\label{basic2}
A &\le &  
(A_{1 1} \cdots  A_{k 1})^{(\alpha _1)} \circ \cdots 
\circ (A_{1 m} \cdots A_{k m})^{(\alpha _m)} , \\
\label{norm2}
\left\|A \right\| &\le &  
\|A_{1 1} \cdots  A_{k 1}\|^{\alpha _1} \cdots \|A_{1 m} \cdots A_{k m}\|^{\alpha _m}, \\ 
\label{spectral2}
\rho \left(A \right) &\le  &
\rho \left( A_{1 1} \cdots  A_{k 1} \right)^{\alpha _1} \cdots 
\rho \left( A_{1 m} \cdots A_{k m}\right)^{\alpha _m} .
\end{eqnarray}

\label{DPBfs}
\end{theorem}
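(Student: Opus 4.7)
The plan is to prove the three conclusions in the order stated: \eqref{basic2} is a pointwise kernel estimate, \eqref{norm2} follows from it by monotonicity of the norm together with one auxiliary bound on Hadamard weighted geometric means, and \eqref{spectral2} drops out by applying the preceding to iterates of $A$.

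For \eqref{basic2} I would write out the kernels explicitly. Denoting by $a_{ij}$ the kernel of $A_{ij}$, each row factor $A_{i1}^{(\alpha_1)} \circ \cdots \circ A_{im}^{(\alpha_m)}$ has kernel $\prod_{j=1}^m a_{ij}(x,y)^{\alpha_j}$, so by iterated integration the kernel of $A$ at a point $(x_0, x_k)$ equals
$$\int_{X^{k-1}} \prod_{j=1}^m \left(\prod_{i=1}^k a_{ij}(x_{i-1},x_i)\right)^{\alpha_j} d\mu(x_1)\cdots d\mu(x_{k-1}).$$
Since $\sum_j \alpha_j = 1$, the generalized H\"older inequality on the product space $X^{k-1}$, with exponents $1/\alpha_1,\ldots,1/\alpha_m$, bounds this integral by
$$\prod_{j=1}^m \left(\int_{X^{k-1}} \prod_{i=1}^k a_{ij}(x_{i-1},x_i)\,d\mu(x_1)\cdots d\mu(x_{k-1})\right)^{\alpha_j},$$
and each inner integral is precisely the kernel of $A_{1j}A_{2j}\cdots A_{kj}$ at $(x_0,x_k)$; this identifies the bound with the kernel of the right-hand side of \eqref{basic2}.

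Passing to \eqref{norm2}, monotonicity of the norm on positive operators applied to \eqref{basic2} reduces the task to establishing the auxiliary inequality
$$\|B_1^{(\alpha_1)} \circ \cdots \circ B_m^{(\alpha_m)}\| \le \|B_1\|^{\alpha_1}\cdots\|B_m\|^{\alpha_m}$$
for positive kernel operators $B_j$, which we then apply with $B_j=A_{1j}\cdots A_{kj}$. After dispatching the trivial case where some $\|B_j\|=0$, I would rescale $B_j \mapsto B_j/\|B_j\|$ to reduce to $\|B_j\|=1$; then the pointwise weighted AM--GM inequality on the kernels yields $B_1^{(\alpha_1)} \circ \cdots \circ B_m^{(\alpha_m)} \le \alpha_1 B_1 + \cdots + \alpha_m B_m$, and the right-hand operator has norm at most $\alpha_1+\cdots+\alpha_m = 1$.

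Finally, for \eqref{spectral2} I would apply \eqref{norm2} to $A^n$, viewed as a product of $kn$ Hadamard weighted geometric means obtained by cycling through the $k$ rows $n$ times. This yields
$$\|A^n\| \le \|(A_{11}\cdots A_{k1})^n\|^{\alpha_1}\cdots\|(A_{1m}\cdots A_{km})^n\|^{\alpha_m};$$
taking $n$-th roots and letting $n\to\infty$ gives \eqref{spectral2} via Gelfand's formula $\rho(T) = \lim_n \|T^n\|^{1/n}$. The main obstacle is the rigorous H\"older step inside the iterated integral: $\sigma$-finiteness of $\mu$ together with Fubini must be invoked to identify the iterated integral with one against the product measure on $X^{k-1}$, and if some integrand fails to be a priori integrable one first truncates the kernels and then passes to the monotone limit. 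Once this technicality is handled the remainder is bookkeeping combined with classical Banach-space inequalities.
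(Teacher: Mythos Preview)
Your argument is correct. For the pointwise inequality \eqref{basic2} and the norm inequality \eqref{norm2} the paper does not supply a proof at all; it simply quotes these from \cite{DP05} and \cite{P06}. Your kernel-level derivation of \eqref{basic2} via the generalized H\"older inequality on $X^{k-1}$, and of \eqref{norm2} via monotonicity together with the AM--GM bound after rescaling, is exactly the classical argument from those references, so there is nothing to contrast here.

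Where the approaches diverge is \eqref{spectral2}. You deduce it from \eqref{norm2} by writing $A^n$ as a product of $kn$ Hadamard means, obtaining $\|A^n\|\le\prod_j\|(A_{1j}\cdots A_{kj})^n\|^{\alpha_j}$ and passing to the limit via Gelfand's formula. The paper instead gives a new proof (its Theorem~\ref{ideaOmlDrn}) that avoids iterates and Gelfand entirely: it embeds the $k$-fold product into a single cyclic block operator $T(A_1,\ldots,A_k)$ on the $k$-fold Cartesian product of $L$, observes that $\rho(T)^k=\rho(A_1\cdots A_k)$ because $T^k$ is block diagonal, and then applies only the single-factor special case $\rho(B_1^{(\alpha_1)}\circ\cdots\circ B_m^{(\alpha_m)})\le\prod_j\rho(B_j)^{\alpha_j}$ to the operators $T_j=T(A_{1j},\ldots,A_{kj})$. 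Your route is more self-contained (it uses nothing beyond what you have already proved for \eqref{basic2} and \eqref{norm2}); the paper's route isolates the ``product-to-power'' mechanism in an algebraic device and reduces the general statement to its $k=1$ case, which is the point the author wishes to make since that same block-matrix idea drives the later results on sets of operators.
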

The following result is a special case  of Theorem \ref{DPBfs}. 
\begin{theorem} 
\label{special_case}
Let $A_1 ,\ldots, A_m$ be positive kernel operators on a Banach function space  $L$,
and $\alpha _1, \ldots, \alpha _m$ positive numbers such that $\sum_{j=1}^m \alpha _j = 1$.
Then we have
\be
 \|A_1 ^{( \alpha _1)} \circ A_2 ^{(\alpha _2)} \circ \cdots \circ A_m ^{(\alpha _m)} \| \le
  \|A_1\|^{ \alpha _1}  \|A_2\|^{\alpha _2} \cdots \|A_m\|^{\alpha _m}  
\label{gl1nrm}
\ee
and
\be
 \rho(A_1 ^{( \alpha _1)} \circ A_2 ^{(\alpha _2)} \circ \cdots \circ A_m ^{(\alpha _m)} ) \le
\rho(A_1)^{ \alpha _1} \, \rho(A_2)^{\alpha _2} \cdots \rho(A_m)^{\alpha _m} .
\label{gl1vecr}
\ee

\end{theorem}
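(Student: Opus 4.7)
The plan is to obtain Theorem \ref{special_case} as an immediate specialization of Theorem \ref{DPBfs} by setting the parameter $k$ equal to $1$. With $k=1$, the doubly-indexed family $\{A_{ij}\}_{i=1,j=1}^{k,m}$ collapses to a single row $A_{1,1}, \ldots, A_{1,m}$, and the product $\prod_{i=1}^k$ that appears in the definition of $A$ has only one factor. Consequently the operator
\[
A = A_{1,1}^{(\alpha_1)} \circ \cdots \circ A_{1,m}^{(\alpha_m)}
\]
coincides with $A_1^{(\alpha_1)} \circ \cdots \circ A_m^{(\alpha_m)}$ after the identification $A_j := A_{1,j}$.

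Concretely, I would set $A_j := A_{1,j}$ for $j=1,\ldots,m$, verify that the convexity hypothesis $\alpha_j > 0$, $\sum_j \alpha_j = 1$ is the same in both statements, and then quote the two relevant inequalities from Theorem \ref{DPBfs}. Inequality (\ref{norm2}) with $k=1$ specializes directly to
\[
\|A_1^{(\alpha_1)} \circ \cdots \circ A_m^{(\alpha_m)}\| \le \|A_1\|^{\alpha_1} \cdots \|A_m\|^{\alpha_m},
\]
which is (\ref{gl1nrm}), while (\ref{spectral2}) with $k=1$ specializes to
\[
\rho\bigl(A_1^{(\alpha_1)} \circ \cdots \circ A_m^{(\alpha_m)}\bigr) \le \rho(A_1)^{\alpha_1} \cdots \rho(A_m)^{\alpha_m},
\]
which is (\ref{gl1vecr}). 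Inequality (\ref{basic2}) becomes trivially an equality in this degenerate case and need not be invoked.

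Since both conclusions follow by literal substitution, there is essentially no obstacle here: everything was already absorbed into Theorem \ref{DPBfs}, and the role of Theorem \ref{special_case} is to highlight the one-row case for convenient later reference. The only thing worth being careful about is to note explicitly that $A$ is defined on all of $L$ (so that $\|A\|$ and $\rho(A)$ are meaningful), which follows from the remark preceding Theorem \ref{DPBfs} that the weighted Hadamard geometric mean is dominated by the weighted arithmetic mean $\sum_j \alpha_j A_j$, hence extends to a bounded positive operator on the whole Banach function space.
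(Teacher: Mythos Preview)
Your proposal is correct and matches the paper's approach exactly: the paper simply states that Theorem~\ref{special_case} is a special case of Theorem~\ref{DPBfs}, and your argument spells out that this amounts to taking $k=1$ and identifying $A_j$ with $A_{1j}$.
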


Recall also that the above results on the spectral radius and operator norm  remain valid under less restrictive assumption $\sum_{j=1}^m \alpha _j \ge 1$ in the case of  (finite or infinite) non-negative matrices that define operators on sequence spaces 
(\cite{EJS88}, \cite{DP05}, \cite{P06}, \cite{P12}, \cite{DP16}).

Let $\Sigma$ be a bounded set of bounded operators on $L$.
For $m \ge 1$, let 
$$\Sigma ^m =\{A_1A_2 \cdots A_m : A_i \in \Sigma\}.$$
The generalized spectral radius of $\Sigma$ is defined by
\be
\rho (\Sigma)= \limsup _{m \to \infty} \;[\sup _{A \in \Sigma ^m} \rho (A)]^{1/m}
\label{genrho}
\ee
and is equal to 
$$\rho (\Sigma)= \sup _{m \in \NN} \;[\sup _{A \in \Sigma ^m} \rho (A)]^{1/m}.$$
The joint spectral radius of $\Sigma$ is defined by
\be
\hat{\rho}  (\Sigma)= \lim _{m \to \infty}[\sup _{A \in \Sigma ^m} \|A\|]^{1/m}.
\label{BW}
\ee
It is well known that $\rho (\Sigma)= \hat{\rho}  (\Sigma)$ for a precompact set $\Sigma$ of compact operators on $L$ (see e.g. \cite{ShT00}, \cite{ShT08}, \cite{Mo}), 
in particular for a bounded set of complex $n\times n$ matrices (see e.g. \cite{BW92}, \cite{E95}, \cite{SWP97}, \cite{Dai11}, \cite{MP12}).
This equality is called the Berger-Wang formula or also the 
generalized spectral radius theorem (for an elegant proof in the finite dimensional case see \cite{Dai11}).
However, in general $\rho (\Sigma)$ and $\hat{\rho}  (\Sigma)$ may differ even in the case of a bounded set $\Sigma$ of compact positive operators on $L$ as the 
following example from \cite{SWP97} shows. Let
$\Sigma =\{A_1, A_2, \ldots \}$ be a bounded set of compact operators on $L=l^2$ defined by $A_k e_k =e_{k+1}, (k \in \NN) $ and 
$A_k e_j =0$ for $j\neq k$.
Then $(A_{i_1}A_{i_2}\cdots A_{i_k})^2=0$ for arbitrary $k \in \NN$ and any subset $\{i_1, i_2, \ldots, i_k\} \subset \NN$.
Thus $\rho(\Sigma)=0$. Since 
$$A_mA_{m-1} \cdots A_1 e_1 =e_{m+1}, \;\;\; m \in \NN , $$
$$A_mA_{m-1} \cdots A_1 e_j =0, \;\;\; j\neq 1 ,$$
we have
$\hat{\rho}(\Sigma)\ge \limsup _{m \to \infty} \|A_m \cdots A_1\| ^{1/m}=1$
and so $\rho(\Sigma) \neq \hat{\rho}(\Sigma)$.

In \cite{Gui82}, the reader can find an example of two positive non-compact weighted shifts $A$ and $B$ on $L=l^2$ such that $\rho(\{A,B\})=0 < \hat{\rho}(\{A,B\})$.

The theory of the generalized and the joint spectral radius has many important applications for instance to discrete and differential inclusions, 
wavelets, invariant subspace theory
(see e.g. \cite{BW92}, \cite{Dai11}, \cite{Wi02}, \cite{ShT00}, \cite{ShT08} and the references cited there).
In particular, $\hat{\rho} (\Sigma)$ plays a central role in determining stability in convergence properties of discrete and differential inclusions. In this 
theory the quantity $\log \hat{\rho} (\Sigma)$ is known as the maximal Lyapunov exponent (see e.g. \cite{Wi02}).

We will  use the following well known facts that
$$\rho (\Sigma  ^m) = \rho (\Sigma)^m ,\;\; \hat{\rho} (\Sigma  ^m) = \hat{\rho} (\Sigma)^m ,\;\;   \rho (\Psi \Sigma) = \rho (\Sigma\Psi) \;\;\mathrm{and}\;\; \hat{\rho} (\Psi \Sigma) = \hat{\rho} (\Sigma\Psi),$$
where $\Psi \Sigma =\{AB: A\in \Psi, B\in \Sigma\}$ and $m\in \NN$.

\section{Results}

First we provide a new proof of the inequality (\ref{spectral2}) (based on its special case (\ref{gl1vecr})) by applying the method of  proof of the inequality (\ref{genHuBfs}) from \cite[Theorem 3.1]{DP16}. 
\begin{theorem} Let $\{A_{i j}\}_{i=1, j=1}^{k, m}$ be positive kernel operators on a Banach function space $L$
and let $\alpha _1$, $\alpha _2$,..., $\alpha _m$  be positive numbers such that $\sum_{j=1}^m \alpha _j = 1$.  Then the inequality (\ref{spectral2}) holds.
\label{ideaOmlDrn}
\end{theorem}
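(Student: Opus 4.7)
The plan is to imitate the kernel-level H\"older argument used to establish (\ref{genHuBfs}) in \cite[Theorem 3.1]{DP16}, but now applied to the operator of (\ref{spectral2}). Let $a_{ij}$ denote the kernel of $A_{ij}$, and set
\[
A := \prod_{i=1}^k \left(A_{i1}^{(\alpha_1)} \circ \cdots \circ A_{im}^{(\alpha_m)}\right), \qquad B_j := A_{1j} A_{2j} \cdots A_{kj} \quad (j = 1, \ldots, m).
\]
The goal is to prove $\rho(A) \le \prod_{j=1}^m \rho(B_j)^{\alpha_j}$.

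First I would fix an integer $n \ge 1$ and, viewing $A^n$ as a composition of $nk$ Hadamard weighted geometric mean factors, write out its kernel at a pair $(z_0, z_{nk})$ as an $(nk-1)$-fold iterated integral. After regrouping the integrand by the index $j$, it takes the form
\[
\prod_{j=1}^m \left[\prod_{l=1}^n \prod_{i=1}^k a_{ij}\!\left(z_{(l-1)k+i-1},\, z_{(l-1)k+i}\right)\right]^{\alpha_j}.
\]
Now I would apply the multi-exponent H\"older inequality with weights $\alpha_1, \ldots, \alpha_m$ (which sum to $1$) to this iterated integral: the integral of the $j$-th bracketed factor over the $nk-1$ intermediate variables is precisely the kernel of $B_j^n$. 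This yields a pointwise upper bound on the kernel of $A^n$ that translates to the operator inequality
\[
A^n \le (B_1^n)^{(\alpha_1)} \circ (B_2^n)^{(\alpha_2)} \circ \cdots \circ (B_m^n)^{(\alpha_m)}.
\]

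Next I would invoke the already-available special case (\ref{gl1vecr}) on the right-hand side, applied to the operators $B_1^n, \ldots, B_m^n$:
\[
\rho\!\left((B_1^n)^{(\alpha_1)} \circ \cdots \circ (B_m^n)^{(\alpha_m)}\right) \le \prod_{j=1}^m \rho(B_j^n)^{\alpha_j} = \prod_{j=1}^m \rho(B_j)^{n \alpha_j}.
\]
Combining this with monotonicity of $\rho$ on positive operators on the Banach lattice $L$ together with the identity $\rho(A^n) = \rho(A)^n$, taking $n$-th roots yields exactly (\ref{spectral2}).

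The main obstacle is the bookkeeping in the first step --- writing out the kernel of $A^n$ as an iterated integral and matching H\"older's output with the kernel of $(B_j^n)^{(\alpha_j)}$; this is routine once the indices $l, i, j$ are tracked carefully. The remaining ingredients (measurability and integrability of the non-negative kernels on the $\sigma$-finite measure space $(X,\mu)$, monotonicity of $\rho$ on positive operators, and $\rho(A^n) = \rho(A)^n$) are either standard Banach-lattice facts or are already available from Section~2.
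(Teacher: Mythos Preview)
Your argument is correct, but it is not the paper's route. What you do is essentially re-derive inequality (\ref{basic2}) via the kernel-level H\"older inequality (your displayed bound $A^n \le (B_1^n)^{(\alpha_1)} \circ \cdots \circ (B_m^n)^{(\alpha_m)}$ is precisely (\ref{basic2}) with $nk$ rows) and then combine this with monotonicity of $\rho$ for positive operators and the special case (\ref{gl1vecr}). Note in particular that your parameter $n$ is superfluous: already $n=1$ gives $A \le B_1^{(\alpha_1)} \circ \cdots \circ B_m^{(\alpha_m)}$, and then monotonicity plus (\ref{gl1vecr}) finishes. This is essentially the original proof strategy of Theorem~\ref{DPBfs} from \cite{DP05} and \cite{P06}.

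The paper instead uses a block-matrix reduction---which is in fact the actual method of \cite[Theorem~3.1]{DP16}, not a H\"older argument. For each $j$ one forms the cyclic-shift block operator $T_j = T(A_{1j},\ldots,A_{kj})$ on the $k$-fold product $L \times \cdots \times L$; since $T_j^k$ is block diagonal with cyclic products on the diagonal, $\rho(T_j)^k = \rho(A_{1j}\cdots A_{kj})$. One then applies (\ref{gl1vecr}) once, directly to $T_1,\ldots,T_m$, and reads off (\ref{spectral2}). The virtue of the paper's approach is that it reduces the general $k$ to the case $k=1$ in a single stroke, using only (\ref{gl1vecr}) and no reprise of (\ref{basic2}); your approach is more elementary and self-contained at the kernel level but is closer to the proof the author is trying to supplement.
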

\begin{proof}
If $A_1, \ldots, A_k$ are positive kernel operators on $L$, then 
the block matrix 
$$ T = T(A_1, A_2, \ldots, A_k) := \left[
\begin{matrix}  
0 & A_1 & 0 &  0 & \ldots & 0 & 0 \cr
0 & 0 &  A_2 & 0  & \ldots & 0 & 0 \cr
0 & 0 & 0 & A_3  & \ldots & 0 & 0 \cr
\vdots & \vdots & \vdots & \ddots & \ddots & \vdots &\vdots \cr
\vdots & \vdots & \vdots & \vdots & \ddots & \ddots &\vdots \cr
0 & 0 & 0 & 0 & \ldots & 0 & A_{k-1} \cr 
A_k & 0 & 0 & 0 & \ldots & 0 & 0
\end{matrix}  \right].  $$
defines a positive kernel operator on the cartesian product of $k$ copies of $L$. Since $T^k$ has a block diagonal form
$$ T^k = \textrm{diag} \, \left( A_1 A_2 \cdots A_k,  A_2 A_3 \cdots A_k A_1,   A_3 A_4 \cdots A_k A_1 A_2, 
\ldots,  A_k A_1 A_2 \cdots A_{k-1} \right) , $$
we have  $\rho (T)^k = \rho (T^k) = \rho ( A_1 A_2 \cdots A_k )$.

Now define $T_i := T(A_{1i},A_{2i}, \ldots, A_{ki})$ for $i =1, 2, \ldots, m$. 
Then $\rho (T_i) = \rho ( A_{1i} A_{2i} \cdots A_{ki} )^{1/k}$ for each  $i =1, 2, \ldots, m$.  Using the inequality (\ref{gl1vecr}) we obtain that
$$  \rho \left(T_1^{\left(\alpha _1\right)} \circ T_2^{\left(\alpha _2\right)} \circ \cdots \circ 
T_m^{\left(\alpha _m \right)}\right) \le \ \rho(T_1) ^{\alpha _1} \, \rho(T_2)^{\alpha _2}  \cdots \rho(T_m)^{\alpha _m}  $$
$$= \left(\rho \left( A_{1 1} \cdots  A_{k 1} \right)^{\alpha _1} \cdots 
\rho \left( A_{1 m} \cdots A_{k m}\right)^{\alpha _m}\right)^{1/k},  $$
since $\sum_{j=1}^m \alpha _j = 1$.
On the other hand, 
$$T_1^{\left(\alpha _1\right)} \circ T_2^{\left(\alpha _2\right)} \circ \cdots \circ T_m^{\left(\alpha _m \right)} = T (C_1, \ldots, C_k)$$
where
$$C_j = \left(A_{j 1}^{(\alpha _1)} \circ \cdots \circ A_{j m}^{(\alpha _m)}\right) $$
for all $j=1, 2, \ldots , k$. It follows that  $\rho (T_1^{\left(\alpha _1\right)} \circ T_2^{\left(\alpha _2\right)} \circ \cdots \circ T_m^{\left(\alpha _m \right)} )=\rho (A)^{1/k}$, where
$$A= \left(A_{1 1}^{(\alpha _1)} \circ \cdots \circ A_{1 m}^{(\alpha _m)}\right) \cdots 
\left(A_{k 1}^{(\alpha _1)} \circ \cdots \circ A_{k m}^{(\alpha _m)}\right),$$
which proves the inequality  (\ref{spectral2}). 
\end{proof}
\begin{remark} In the case of non-negative matrices that define operators  on a Banach  sequence space (see e.g. \cite{P12}, \cite{DP16}, \cite{DP05}, \cite{P06} for exact definitions),  the same proof works for 
positive numbers $\alpha _1$, $\alpha _2$,..., $\alpha _m$  such that $\sum_{j=1}^m \alpha _j \ge 1$.
\end{remark}

Let $\Psi _1, \ldots , \Psi _m$ be bounded sets of positive kernel operators on a Banach function space $L$ and let $\alpha _1, \ldots \alpha _m$ be positive numbers such that 
$\sum _{i=1} ^m \alpha _i = 1$. Then the bounded set of positive kernel operators on $L$, defined by
$$\Psi _1 ^{( \alpha _1)} \circ \cdots \circ \Psi _m ^{(\alpha _m)}=\{ A_1 ^{( \alpha _1)} \circ \cdots \circ A _m ^{(\alpha _m)}: A_1\in \Psi _1, \ldots, A_m \in \Psi _m \},$$
is called the {\it weighted Hadamard (Schur) geometric mean} of sets $\Psi _1, \ldots , \Psi _m$. The set  
$\Psi _1 ^{(\frac{1}{m})} \circ \cdots \circ \Psi _m ^{(\frac{1}{m})}$ is called the  {\it Hadamard (Schur) geometric mean} of sets $\Psi _1, \ldots , \Psi _m$.

A version of the following result on the generalized and the joint spectral radius was stated in \cite[Theorem 3.4]{P12} and \cite[Corollary 5.3]{P06} only in the case of bounded sets  of  
non-negative matrices that define operators on Banach sequence spaces, 
however the same proof works in our more general setting by applying the inequalities (\ref{spectral2}) and (\ref{norm2}). The proof is included for the convenience of the reader.
\begin{theorem}
Let $\Psi _1, \ldots \Psi _m$ be bounded sets of positive kernel operators on a Banach function space $L$ and let 
 $\alpha _1, \ldots \alpha _m$ be positive numbers such that \\
$\sum _{i=1} ^m \alpha _i = 1$.
Then we have 
\be
\rho (\Psi _1 ^{( \alpha _1)} \circ \cdots \circ \Psi _m ^{(\alpha _m)} ) \le 
\rho (\Psi _1)^{ \alpha _1} \, \cdots \rho(\Psi _m)^{\alpha _m} 
\label{gsh}
\ee
and
\be
\hat{\rho } (\Psi _1 ^{( \alpha _1)} \circ \cdots \circ \Psi _m ^{(\alpha _m)} ) \le 
\hat{\rho }  (\Psi _1)^{ \alpha _1} \, \cdots \hat{\rho } (\Psi _m)^{\alpha _m}. 
\label{gsh2}
\ee
\label{family}
\end{theorem}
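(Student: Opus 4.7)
The plan is to reduce both inequalities to Theorem \ref{DPBfs}, applied to arbitrary length-$k$ products of elements of the Hadamard geometric mean set, and then to invoke the definitions of $\rho(\Sigma)$ and $\hat{\rho}(\Sigma)$ directly.

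First I would fix $k \in \NN$ and pick an arbitrary $B \in (\Psi_1^{(\alpha_1)} \circ \cdots \circ \Psi_m^{(\alpha_m)})^k$. By definition of the weighted Hadamard geometric mean of sets, such $B$ has the form
$$B = \prod_{i=1}^k \left(B_{i1}^{(\alpha_1)} \circ \cdots \circ B_{im}^{(\alpha_m)}\right),$$
with $B_{ij} \in \Psi_j$ for all $i$ and $j$. This is exactly the operator to which Theorem \ref{DPBfs} applies with $A_{ij} := B_{ij}$, so inequality (\ref{spectral2}) gives
$$\rho(B) \le \rho(B_{11} B_{21} \cdots B_{k1})^{\alpha_1} \cdots \rho(B_{1m} B_{2m} \cdots B_{km})^{\alpha_m}.$$
Since for each $j$ the operator $B_{1j} B_{2j} \cdots B_{kj}$ lies in $\Psi_j^k$, we may bound each factor by $\sup_{C \in \Psi_j^k} \rho(C)$ and then take the supremum over all admissible choices of $B_{ij}$ to obtain
$$\sup_{B \in (\Psi_1^{(\alpha_1)} \circ \cdots \circ \Psi_m^{(\alpha_m)})^k} \rho(B) \le \prod_{j=1}^m \left(\sup_{C \in \Psi_j^k} \rho(C)\right)^{\alpha_j}.$$

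Next I would raise both sides to the power $1/k$ and pass to $\limsup_{k\to\infty}$. The left-hand side becomes $\rho(\Psi_1^{(\alpha_1)} \circ \cdots \circ \Psi_m^{(\alpha_m)})$ by (\ref{genrho}), while the right-hand side is dominated by $\prod_j \rho(\Psi_j)^{\alpha_j}$, using that for nonnegative sequences the $\limsup$ of a product does not exceed the product of the $\limsup$s. This establishes (\ref{gsh}). For the joint spectral radius (\ref{gsh2}), I would run a parallel argument, replacing (\ref{spectral2}) by (\ref{norm2}) and definition (\ref{genrho}) by (\ref{BW}).

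I do not expect any serious obstacle: the heavy lifting has already been done in Theorem \ref{DPBfs}, and what remains is a bookkeeping step of taking suprema inside a pointwise inequality and passing to $k$-th roots in the limit. The one point requiring a moment of care is the limit passage in the joint spectral radius case, where one uses the standard fact that the sequence $[\sup_{A \in \Sigma^k} \|A\|]^{1/k}$ actually converges (by submultiplicativity, via Fekete's lemma), so that the $\limsup$-of-a-product bound applied to the estimate above indeed yields the product $\prod_j \hat{\rho}(\Psi_j)^{\alpha_j}$ on the right.
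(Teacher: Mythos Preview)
Your proposal is correct and follows essentially the same approach as the paper: pick an arbitrary element of the $k$-th power of the Hadamard geometric mean set, apply Theorem \ref{DPBfs} (inequalities (\ref{spectral2}) and (\ref{norm2})), bound each factor by the supremum over $\Psi_j^k$, and then pass to the limit in $k$. The only cosmetic difference is that the paper exploits the characterization $\rho(\Sigma)=\sup_{k}[\sup_{A\in\Sigma^k}\rho(A)]^{1/k}$, so the right-hand side is bounded by $\prod_j \rho(\Psi_j)^{\alpha_j}$ for each fixed $k$ and no $\limsup$-of-products argument is needed; your version via $\limsup$ is equally valid.
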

\begin{proof}
Let $A \in (\Psi _1 ^{( \alpha _1)} \circ \cdots \circ \Psi _m ^{(\alpha _m)})^l$, $l \in \NN$. Then there are 
$A_{ik} \in \Psi _k$, $i=1, \ldots ,l$, $k=1,\ldots , m$ such that
$$A=(A_{11} ^{\alpha _1} \circ \cdots \circ A_{1m} ^{\alpha _m}) \cdots (A_{l1} ^{\alpha _1} \circ \cdots \circ A_{lm} ^{\alpha _m}).$$
By Theorem \ref{DPBfs} we have 
\be
\rho(A) \le \rho (A_{11} \cdots A_{l1})^{\alpha _1} \cdots \rho (A_{1m} \cdots A_{lm})^{\alpha _m}.
\label{useg}
\ee
Since $A_{1k} \cdots A_{lk} \in \Psi_k ^l$ for all $k=1,\ldots , m$, (\ref{useg}) implies (\ref{gsh}).

By replacing $\rho (\cdot)$ with $\|\cdot\|$ in the proof above, we obtain the inequality (\ref{gsh2}), which completes the proof.

\end{proof}

Now we  prove our main result, which is  a generalization of the inequality (\ref{genHuBfs}).  It can also be considered as a kernel version of \cite[Theorem 3.16]{P12}, which holds for bounded sets of non-negative matrices that define operators on Banach sequence spaces.
\begin{theorem} Let $\Psi _1, \ldots \Psi _m$ be bounded sets of positive kernel operators on a Banach function space $L$.
Then we have
\be
\rho \left(\Psi _1 ^{\left( \frac{1}{m} \right)} \circ \cdots \circ \Psi _m  ^{\left( \frac{1}{m} \right)} \right) \le \rho (\Psi _1 \Psi _2 \cdots \Psi _m) ^{\frac{1}{m}}
\label{Hurho}
\ee
and
\be
\hat{\rho}  \left(\Psi _1 ^{\left( \frac{1}{m} \right)} \circ \cdots \circ \Psi _m  ^{\left( \frac{1}{m} \right)} \right) \le \hat{\rho}   (\Psi _1 \Psi _2 \cdots \Psi _m) ^{\frac{1}{m}}.
\label{Hurhoh}
\ee
\label{genHuDP}
\end{theorem}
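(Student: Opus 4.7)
The plan is to lift the single-operator block-matrix cyclic-shift argument of \cite[Theorem 3.1]{DP16} (which proved the single-element inequality (\ref{genHuBfs})) to bounded sets, and then combine it with Theorem \ref{family} applied to the resulting block-operator sets. All block operators will act on the Banach function space $L^m := L \times L \times \cdots \times L$ equipped with the max-norm, and $T(\cdot)$ denotes the cyclic block operator appearing in the proof of Theorem \ref{ideaOmlDrn}.

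For each $j \in \{1, \ldots, m\}$ I would introduce the bounded set
$$\Theta_j \;=\; \bigl\{ T(C_1, C_2, \ldots, C_m) : C_i \in \Psi_{j+i-1 \,(\mathrm{mod}\, m)}, \ i=1,\ldots,m \bigr\}$$
of positive kernel operators on $L^m$. Since the Hadamard product is commutative and the only nonzero blocks of any member of $\Theta_j$ lie on the cyclic super-diagonal, a direct $(i,i+1)$-block calculation yields the set identity
$$\Theta_1^{(1/m)} \circ \cdots \circ \Theta_m^{(1/m)} \;=\; \bigl\{ T(\tilde B_1, \ldots, \tilde B_m) : \tilde B_i \in \Gamma \bigr\} =: \Omega,$$
where $\Gamma := \Psi_1^{(1/m)} \circ \cdots \circ \Psi_m^{(1/m)}$.

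The next step is to compute $\rho(\Theta_j)$, $\hat{\rho}(\Theta_j)$, $\rho(\Omega)$, and $\hat{\rho}(\Omega)$. For any $T_1, \ldots, T_{ml} \in \Theta_j$, the product $T_1 T_2 \cdots T_{ml}$ is block diagonal; its $p$-th diagonal block lies in the cyclic shift $(\Psi_p \Psi_{p+1} \cdots \Psi_{p-1})^l$, and distinct diagonal blocks involve disjoint choices of factors, so suprema pass through the max over blocks. Combined with $\rho(\mathrm{diag}(D_1, \ldots, D_m)) = \max_i \rho(D_i)$ and $\|\mathrm{diag}(D_1, \ldots, D_m)\| = \max_i \|D_i\|$ (the latter thanks to the max-norm on $L^m$), with the standard identities $\rho(\Sigma^m) = \rho(\Sigma)^m$, $\hat{\rho}(\Sigma^m) = \hat{\rho}(\Sigma)^m$ and cyclic invariance of $\rho$ and $\hat{\rho}$ recalled in the preliminaries, this yields
$$\rho(\Theta_j)^m = \rho(\Psi_1 \Psi_2 \cdots \Psi_m) \quad \text{and} \quad \hat{\rho}(\Theta_j)^m = \hat{\rho}(\Psi_1 \Psi_2 \cdots \Psi_m)$$
for every $j$. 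The same reasoning applied to $\Omega$, with every $\Psi_i$ replaced by $\Gamma$, gives $\rho(\Omega) = \rho(\Gamma)$ and $\hat{\rho}(\Omega) = \hat{\rho}(\Gamma)$.

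Applying Theorem \ref{family} with equal weights $\alpha_i = 1/m$ to the bounded sets $\Theta_1, \ldots, \Theta_m$ finally gives
$$\rho(\Gamma) = \rho(\Omega) \le \prod_{j=1}^m \rho(\Theta_j)^{1/m} = \rho(\Psi_1 \Psi_2 \cdots \Psi_m)^{1/m},$$
and the identical chain with $\hat{\rho}$ in place of $\rho$ yields (\ref{Hurhoh}). I expect the main technical obstacle to be the joint-spectral-radius identity $\hat{\rho}(\Theta_j)^m = \hat{\rho}(\Psi_1 \cdots \Psi_m)$: since the operator norm itself is not cyclically invariant, one must pass to the limit $\hat{\rho}(\Theta_j^m) = \lim_l [\sup_{S \in \Theta_j^{ml}} \|S\|]^{1/l}$ via the observation that $[\max_p a_p^{(l)}]^{1/l} = \max_p (a_p^{(l)})^{1/l}$, where $a_p^{(l)} := \sup_{C \in (\Psi_p \cdots \Psi_{p-1})^l} \|C\|$, together with the fact that each sequence $(a_p^{(l)})^{1/l}$ converges to the common value $\hat{\rho}(\Psi_1 \cdots \Psi_m)$ by cyclic invariance of $\hat{\rho}$, so that the finite max over $p$ converges to the same limit.
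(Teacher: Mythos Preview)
Your argument is correct, but it takes a genuinely different route from the paper's proof. The paper works directly at the element level: for $A\in\Gamma^{mk}$ (where $\Gamma=\Psi_1^{(1/m)}\circ\cdots\circ\Psi_m^{(1/m)}$) it writes $A=A_1\cdots A_k$ with each $A_i$ a product of $m$ Hadamard means, cyclically permutes the factors inside each Hadamard mean (commutativity of $\circ$), and then applies the pointwise inequality~(\ref{basic2}) once to obtain $A\le B_1^{(1/m)}\circ\cdots\circ B_m^{(1/m)}$ with $B_j\in(\Psi_j\Psi_{j+1}\cdots\Psi_{j-1})^k$; Theorem~\ref{special_case} and cyclic invariance of $\rho$ (resp.\ $\hat\rho$) finish the job. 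Your proof instead lifts everything to $L^m$, packages the cyclic shift into the sets $\Theta_j$, and invokes Theorem~\ref{family} as a black box. Both arguments rest on the same underlying inequalities of Theorem~\ref{DPBfs}, but the paper's version is shorter and avoids the block-matrix bookkeeping---in particular the computation of $\hat\rho(\Theta_j)$ via diagonal blocks and the max-norm, which you rightly identify as the most delicate point. Your approach, in return, makes the analogy with the single-operator proof of Theorem~\ref{ideaOmlDrn} fully transparent and reduces the new content to one clean application of Theorem~\ref{family}.
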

\begin{proof}
To prove (\ref{Hurho}) we will show that 
$$\rho \left(\Psi _1 ^{\left( \frac{1}{m} \right)} \circ \cdots \circ \Psi _m  ^{\left( \frac{1}{m} \right)} \right)^m \le \rho (\Psi _1 \Psi _2 \cdots \Psi _m). $$
Take $A\in  \left(\Psi _1 ^{\left( \frac{1}{m} \right)} \circ \cdots \circ \Psi _m  ^{\left( \frac{1}{m} \right)} \right)^{mk}$. Then $A=A_1 A_2\cdots A_k$, where
$$A_i= \left(A_{i \,1 \,1} ^{\left( \frac{1}{m} \right)} \circ A_{i\, 1\, 2} ^{\left( \frac{1}{m} \right)} \circ \cdots \circ A_{i\, 1\, m} ^{\left( \frac{1}{m} \right)}\right)\left(A_{i\, 2\, 1} ^{\left( \frac{1}{m} \right)}\circ A_{i\, 2\, 2} ^{\left( \frac{1}{m} \right)} \circ \cdots \circ A_{i\, 2\, m} ^{\left( \frac{1}{m} \right)}\right) \cdots $$
$$\cdots \left(A_{i m 1} ^{\left( \frac{1}{m} \right)}\circ A_{i m 2} ^{\left( \frac{1}{m} \right)} \circ \cdots \circ A_{i m m} ^{\left( \frac{1}{m} \right)}\right) $$
for some $A_{i\, j\, 1} \in \Psi _1, \ldots, A_{i\, j \,m} \in \Psi _m$ and all $j=1, \ldots, m$, $i=1, \ldots, k$. Then  
$$A_i= \left(A_{i\, 1\, 1}  ^{\left( \frac{1}{m} \right)} \circ A_{i\, 1\, 2}  ^{\left( \frac{1}{m} \right)} \circ \cdots \circ A_{i\, 1\, m}  ^{\left( \frac{1}{m} \right)}\right)\left(A_{i\, 2\, 2}  ^{\left( \frac{1}{m} \right)} \circ \cdots \circ A_{i \,2 \,m} ^{\left( \frac{1}{m} \right)}\circ A_{i\, 2\, 1}  ^{\left( \frac{1}{m} \right)}\right) \cdots$$
$$  \cdots \left(A_{i\, m \,m}  ^{\left( \frac{1}{m} \right)}\circ A_{i\, m\, 1}  ^{\left( \frac{1}{m} \right)} \circ \cdots \circ A_{i\, m\, m-1} ^{\left( \frac{1}{m} \right)} \right). $$
By (\ref{basic2}) we have
$$A=A_1 A_2\cdots A_k \le B_1 ^{\left( \frac{1}{m} \right)}  \circ B_2 ^{\left( \frac{1}{m} \right)} \circ \cdots \circ B_m ^{\left( \frac{1}{m} \right)},$$
where
$$B_1 = \prod _{i=1} ^k A_{i\, 1\, 1}A_{i\, 2\, 2} \cdots A_{i\, m \,m} \in (\Psi _1 \Psi _2 \cdots \Psi _m)^k,$$
$$B_2 = \prod _{i=1} ^k A_{i\, 1\, 2}A_{i\, 2\, 3} \cdots A_{i\, m\, 1} \in (\Psi _2 \Psi _3 \cdots \Psi _1)^k,$$
$$\cdots \cdots \cdots$$
$$B_m = \prod _{i=1} ^k A_{i\, 1\, m}A_{i\, 2\, 1} \cdots A_{i\, m\, m-1} \in (\Psi _m \Psi _1 \cdots \Psi _{m-1})^k.$$
By Theorem \ref{special_case} we have
$$\rho (A) \le  \rho (B_1) ^{\frac{1}{m}} \rho (B_2)^{\frac{1}{m}} \cdots \rho (B_m)^{\frac{1}{m}},$$
which implies 
$$\rho  \left(\Psi _1 ^{\left( \frac{1}{m} \right)} \circ \cdots \circ \Psi _m  ^{\left( \frac{1}{m} \right)} \right)^m \le \left(\rho (\Psi _1 \Psi _2 \cdots \Psi _m)\rho (\Psi _2 \Psi _3 \cdots \Psi _1)\cdots \rho (\Psi _m \Psi _1 \cdots \Psi _{m-1}) \right) ^{\frac{1}{m}} $$
$$= \rho (\Psi _1 \Psi _2 \cdots \Psi _m).$$
This proves (\ref{Hurho}) and the inequality (\ref{Hurhoh}) is proved similarly.
\end{proof}
The following special case of Theorem \ref{genHuDP} generalizes (\ref{Schep}).
\begin{corollary} Let $\Psi $ and $\Sigma$  be bounded sets of positive kernel operators on a Banach function space $L$.
Then we have
\be
\rho \left(\Psi ^{\left( \frac{1}{2} \right)} \circ \Sigma  ^{\left( \frac{1}{2} \right)} \right) \le \rho (\Psi \Sigma) ^{\frac{1}{2}}
\label{Hurho2}
\ee
and
\be
\hat{\rho}  \left(\Psi ^{\left( \frac{1}{2} \right)} \circ \Sigma  ^{\left( \frac{1}{2} \right)} \right) \le \hat{\rho} (\Psi \Sigma) ^{\frac{1}{2}}
\label{Hurhoh2}
\ee
\label{genHu}
\end{corollary}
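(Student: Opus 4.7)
The plan is extremely short, because Corollary \ref{genHu} is literally the $m=2$ instance of the main theorem. I would set $\Psi_1 := \Psi$ and $\Psi_2 := \Sigma$ and directly invoke inequalities (\ref{Hurho}) and (\ref{Hurhoh}) of Theorem \ref{genHuDP}. Specializing the left-hand sides to $\Psi^{(1/2)} \circ \Sigma^{(1/2)}$ and the right-hand sides to $\rho(\Psi\Sigma)^{1/2}$ and $\hat{\rho}(\Psi\Sigma)^{1/2}$ respectively yields (\ref{Hurho2}) and (\ref{Hurhoh2}) immediately. No auxiliary construction, no additional estimate, and no further appeal to Theorem \ref{DPBfs} or Theorem \ref{special_case} is needed beyond what was already used to prove Theorem \ref{genHuDP}.

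Since there is essentially no work to be done, there is no real obstacle; the only thing worth pointing out is the connection to Schep's original inequality (\ref{Schep}). When $\Psi = \{A\}$ and $\Sigma = \{B\}$ are singletons, one has $\Psi\Sigma = \{AB\}$, and both $\rho(\{AB\})$ and $\hat{\rho}(\{AB\})$ reduce to the ordinary spectral radius $\rho(AB)$ (the joint spectral radius of a single operator equals its spectral radius by Gelfand's formula). Thus (\ref{Hurho2}) recovers (\ref{Schep}) in the singleton case, and (\ref{Hurhoh2}) says nothing more in that case. I would therefore simply write the proof as a one-sentence application of Theorem \ref{genHuDP} with $m=2$.
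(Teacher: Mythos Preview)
Your proposal is correct and matches the paper's own treatment: the paper presents this corollary without a separate proof, introducing it simply as the $m=2$ special case of Theorem \ref{genHuDP}, which is exactly what you do. Your additional remark about recovering (\ref{Schep}) in the singleton case is also consistent with the paper's framing.
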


\noindent {\bf Acknowledgements.} The author thanks Professor Franz Lehner for reading the first version of this article.

This work was supported in part by the JESH grant of the Austrian Academy of Sciences and by grant P1-0222 of the Slovenian Research Agency. 

\bibliographystyle{amsplain}

\end{document}